\documentclass[a4wide,11pt]{scrartcl}

\usepackage[USenglish]{babel}
\usepackage[T1]{fontenc}
\usepackage[utf8]{inputenc}


\usepackage{graphicx,subfigure,tikz,pgfplots,epstopdf}
\graphicspath{{figs/}}

\usepackage{amsmath,amssymb,amsfonts,amsthm,array,geometry,stmaryrd,booktabs,paralist,todonotes,mathtools,pdflscape,siunitx,placeins}

\usepackage{ifpdf}
\ifpdf
\usepackage[pdftex,colorlinks=true,linkcolor=blue,citecolor=green,urlcolor=blue,bookmarks]{hyperref}
\else
\usepackage[dvipdfm,colorlinks=true,linkcolor=blue,citecolor=green,urlcolor=blue,bookmarks]{hyperref}
\fi 


\numberwithin{equation}{section}


\newcommand{\R}{\mathbb{R}}
\newcommand{\dd}{\textup{d}}

\newtheorem{thm}{Theorem}[section]

\newtheorem{rem}[thm]{Remark}
\newtheorem{cor}[thm]{Corollary}

\usepackage{url}
\usepackage{dsfont}
\usepackage{amsmath}
\usepackage{nicefrac}
\usepackage{xspace}
\usepackage{framed}

\usepackage{cite}

\newcommand{\p}[2]{\ensuremath{\frac{\partial #1}{\partial #2 }}}



\begin{document}
    \title{Stabilization of a Multi-Dimensional System of Hyperbolic Balance Laws}
    \author{Michael Herty\footnotemark[1] \and  Ferdinand Thein\footnotemark[1]\, \footnotemark[2]}
    \date{}
    \maketitle
    \begin{abstract}
        We are interested in the feedback stabilization of systems described by Hamilton-Jacobi type equations in $\R^n$.
        A reformulation leads to a stabilization problem for a multi-dimensional system of $n$ hyperbolic partial differential equations.
        Using a novel Lyapunov function taking into account the multi-dimensional geometry we show stabilization in $L^2$ for the arising system using a suitable feedback control.
        We further present examples of such systems partially based on a forming process.
    \end{abstract}
    \renewcommand{\thefootnote}{\fnsymbol{footnote}}
    \footnotetext[1]{IGPM RWTH Aachen, Templergraben 55, D-52056 Aachen, Germany.\\
    \href{mailto:thein@igpm.rwth-aachen.de}{\textit{thein@igpm.rwth-aachen.de}}\\
    \href{mailto:herty@igpm.rwth-aachen.de}{\textit{herty@igpm.rwth-aachen.de}}}
    \footnotetext[2]{Corresponding Author}
    \renewcommand{\thefootnote}{\arabic{footnote}}
    \section{Introduction}

    Stabilization of spatially one--dimensional systems of hyperbolic balance laws has recently attracted research interest in the mathematical and engineering community
    and we refer to the monographs \cite{Bastin2016,MR2302744,MR2655971,MR2412038} for further references and an overview also on related controllability problems.
    Particular applications have been isentropic and isothermal Euler and shallow water equations that form a 2$\times$2 hyperbolic system to model the temporal and spatial evolution of gas and water flow (also on networks).
    Boundary control of such systems has been studied in a series of papers with analytical results  e.g. in the case of gas flow \cite{G1,G2,G3,G4} and water flow \cite{W1,W2,W3,W4,W5,W6}.
    A key tool in the analysis has been the introduction of a Lyapunov function as a weighted $L^2$ (or $H^s$) function that allows to estimate deviations from steady states, see e.g. \cite{Bastin2016}.
    Exponential decay of  Lyapunov function under general {dissipative} conditions has been established for a variety of problem formulations \cite{L1,L2,L4,L5} and a comparisons to other stability concepts is presented
    e.g. in \cite{L7}.
    Stability with respect to a higher $H^s$-norm $(s\geq 2)$ gives stability of the nonlinear system \cite{L5,Bastin2016}.
    Without aiming to give a complete list of extensions, we mention that the results have been recently extended to treat e.g.\ also input-to-state stability \cite{MR2899713},
    numerical discretizations \cite{MR3956429,MR3031137,MR3648349} and nonlocal hyperbolic partial differential equations (PDEs) \cite{MR4172728}.
    \par 
    However, to the best of our knowledge the presented results are limited to spatially one--dimensional case.
    A specific system in two dimensions is discussed in \cite{Dia2013} where a control problem for the shallow water equations is studied.
    There the authors take advantage of the structure of the system and show that the energy is non-increasing upon imposing suited boundary conditions.
    Based on an example in metal forming processes, see \cite{CDC2022} and Section \ref{sec:app_hamjac}, we aim to extend results to multi--dimensional hyperbolic balance laws.
    For the particular structure arising e.g. in the reformulation of Hamilton--Jacobi equations we show $L^2-$stabilization of multi--dimensional hyperbolic systems of balance laws with variable coefficient matrices.
    These matrices are diagonal and hence symmetric matrices. While this restricts the generality it is worth mentioning that many physically relevant systems enjoy this symmetry, see e.g. \cite{Friedrichs1954,Godunov1961}.
    Also, well--posedness results related to symmetric hyperbolic systems are obtained in several publications and without assuming completeness we exemplary refer \cite{Brenner1966} for results in $L^p$
    and \cite{Kato1975} for the linear and quasi linear case. Initial boundary value problems have been studied e.g. in \cite{Majda1975} or \cite{Peyser1975} in the case of constant coefficients.
    In more recent publications, symmetric hyperbolic systems have been used in the modeling and simulation of continuum mechanics \cite{Dumbser2016,Peshkov2021,Busto2022}.
    For  additional references on this topic we  also refer to \cite{Dafermos2016,Ruggeri2021}.
    \par
    In this work we will study the boundary stabilization of a class of multi--dimensional linear hyperbolic balance laws using an extension of the Lyapunov function introduced in \cite{MR2302744}.
    The particular structure of the system is exploited to derive a condition on the feedback law, such that the Lyapunov function decays exponentially fast,
    see Section \ref{sec:defthm} and Section \ref{sec:prf} for the proof.
    The studied system is motivated by applications based on general Hamilton--Jacobi equations and their relation to the studied system will be stated in Section \ref{sec:app_hamjac}.
    Further examples are also given therein.
    \section{Stabilization of Multi-Dimensional Linear Hyperbolic Balance Laws}\label{sec:defthm}
    We are interested in the following initial boundary value problem (IBVP) for the given system of PDEs
    \begin{align}
        \begin{dcases}
        \p{}{t}\mathbf{w} + \sum_{k=1}^d\mathbf{A}^{(k)}(\mathbf{x})\p{}{x_k}\mathbf{w}(t,\mathbf{x}) + \mathbf{B}(\mathbf{x})\mathbf{w}(t,\mathbf{x}) &= 0,\;(t,\mathbf{x})\in[0,T)\times\Omega\\
        \mathbf{w}(0,\mathbf{x}) &= \mathbf{w}_0(\mathbf{x}),\;\mathbf{x}\in\Omega,\\
        \mathbf{w}(t,\mathbf{x}) &= \mathbf{w}_{BC}(t,\mathbf{x}),\;(t,\mathbf{x})\in[0,T)\times\partial\Omega
        \end{dcases}\label{eq:hyp_cons_sys}
    \end{align}
    Here $\mathbf{w} \equiv (w_1(t,\mathbf{x}),\dots,w_n(t,\mathbf{x}))^T$ is the vector of unknowns
    and $\Omega \subset \R^d$ a bounded domain with sufficiently smooth boundary $\partial\Omega$.
    Moreover, $\mathbf{A}^{(k)}$ and $\mathbf{B}$ are sufficiently smooth and bounded $n\times n$ real matrices.\\
    Additionally, the matrices $\mathbf{A}^{(k)}(\mathbf{x}) = (a_{ii}^{(k)}(\mathbf{x}))_{i=1,\dots,n}$ are assumed to be  diagonal matrices and hence have a full set of eigenvectors. The system is hyperbolic.
    With the convention $t \equiv x_0$ and $\mathbf{A}_0 := \mathbf{Id}$ a compressed form of system \eqref{eq:hyp_cons_sys} is 
    \begin{align}
        \sum_{k=0}^d\mathbf{A}^{(k)}(\mathbf{x})\p{}{x_k}\mathbf{w} + \mathbf{B}(\mathbf{x})\mathbf{w} = 0.\label{eq:w_PDE_v2}
    \end{align}
    This system is symmetric hyperbolic in the sense of Friedrichs, i.e., all matrices $\mathbf{A}^{(k)}$ are symmetric and $\mathbf{A}_0 = \mathbf{Id} > 0$, see \cite{Friedrichs1954}.
    The boundary condition will be made more precise later on.
    According to the references \cite{Kato1975,Majda1975,Ruggeri2021,Dafermos2016} there exists a solution $\mathbf{w} \in C^1((0,T),H^s(\Omega))^n$ $,s \geq 1 + \frac{d}{2}$
    if the initial and boundary data are sufficiently smooth.
    In order to study the Lyapunov function it is beneficial to rewrite the equation for each unknown $w_i$ as follows
    \begin{align*}
        &\p{}{t}w_i(t,\mathbf{x}) + \mathbf{a}_i(\mathbf{x})\cdot\nabla w_i(t,\mathbf{x}) + \mathbf{b}_i(\mathbf{x})\cdot\mathbf{w}(t,\mathbf{x}) = 0,\\
        \text{with}\quad &\mathbf{a}_i := (a_{ii}^{(1)},\dots,a_{ii}^{(d)}), \;   	\text{and}\quad \mathbf{b}_i := (b_{i1},\dots,b_{in}).
    \end{align*}
    We assume the following non-characteristic condition
    \begin{align}
       \mathbf{a}_i(\mathbf{x}) \neq \mathbf{0},\,i=1,\dots n.\label{non_charact_cond}
    \end{align}
    In the case $\mathbf{a}_i(\mathbf{x}) = \mathbf{0}$ the PDE for $w_i$ would degenerate to an ODE and thus this case is of no interest in the present work.
    For later use, let
    \begin{align*}
        \mathcal{E}(\mu(\mathbf{x})) &:= \textup{diag}(\exp(\mu_1(\mathbf{x})),\dots,\exp(\mu_n(\mathbf{x}))),\\
        \mathcal{A}(t,\mathbf{x}) &:= \left(\mathbf{w}^T\mathbf{A}^{(1)}\mathcal{E}\mathbf{w},\dots,\mathbf{w}^T\mathbf{A}^{(d)}\mathcal{E}\mathbf{w}\right)^T,\\
        \mathcal{M}^{(k)}(\mathbf{x}) &:= \textup{diag}\left(\p{}{x_k}\mu_1(\mathbf{x}),\dots,\p{}{x_k}\mu_n(\mathbf{x})\right),\,k=1,\dots,d.
    \end{align*}
    The functions $\mu_1(\mathbf{x}),\dots,\mu_n(\mathbf{x})$ will be defined in the subsequent theorem by \eqref{eq:mu_pde}.
    Further, we assume that there exists  a  diagonal matrix $\mathcal{D}$ such that we have
    \begin{align}\label{ass1}
        -\mathbf{v}^T\left(\mathbf{B}^T\mathcal{E} + \mathcal{E}\mathbf{B}\right)\mathbf{v} \leq \mathbf{v}^T\mathcal{D}\mathcal{E}\mathbf{v},\;\forall \mathbf{v} \in \R^n.
    \end{align}
    This kind of dissipativity condition for the linear source term ensures together with \eqref{non_charact_cond} that we find suited weights $\mu_i$ using \eqref{eq:mu_pde}.
    There are different possible choices for $\mathcal{D}$ depending on the problem under consideration.
    We will comment on this later on, see Remark \ref{rem:matrix_est} and in Section \ref{sec:app_hamjac} for some examples.
    The boundary $\partial\Omega$ will be separated in the controllable and uncontrollable part, i.e. for $i=1,\dots,n$
    \begin{align*}
        \Gamma_i^+ &:= \left\{\left.\mathbf{x} \in \partial\Omega\,\right|\,\mathbf{a}_i\cdot\mathbf{n} \geq 0\right\},\\
        \Gamma_i^- &:= \left\{\left.\mathbf{x} \in \partial\Omega\,\right|\,\mathbf{a}_i\cdot\mathbf{n} < 0\right\}.
    \end{align*}
    The part $\Gamma_i^-$ is then split disjointly into the part $\mathcal{C}_i$ where we apply a feedback control $u_i$ and a part $\mathcal{Z}_i$ where we prescribe $w_i = 0$.\\
    Finally we want to specify the notion of exponential stability. We call a solution $\mathbf{w} \in C^1(0,T;H^s(\Omega))^n$ for $s\geq 1+ \frac{d}2$  of the system
    \eqref{eq:hyp_cons_sys} exponentially stable in the $L^2-$sense, iff
    \begin{align}
        \|\mathbf{w}(t,.)\|_{L^2(\Omega)} \leq  \exp(-Ct)\|\mathbf{w}(0,.)\|_{L^2(\Omega)},\,C\in\R_{>0}.\label{def:asymp_stable}
    \end{align}
    The Lyapunov function we will introduce in Theorem \ref{thm:main} is equivalent to the $L^2(\Omega)$ norm.
    Now, we  state the main result of this paper.
    \begin{thm}\label{thm:main}
        Assume \eqref{ass1} and let $\mathbf{w}(t,\mathbf{x}) \in C^1\left((0,T),H^s(\Omega)\right)^n$, $s \geq 1+ d/2,$
        be a solution to the IBVP \eqref{eq:hyp_cons_sys}. Define the Lyapunov function by
        \begin{align}
            L(t) = \int_\Omega \mathbf{w}(t,\mathbf{x})^T\mathcal{E}(\mu(\mathbf{x}))\mathbf{w}(t,\mathbf{x})\,\dd\mathbf{x}\label{eq:lyapunov_gen}
        \end{align}
        and assume that there exists $\mu_i(\mathbf{x}) \in H^s(\Omega)$ such that 
        \begin{align}
            \begin{split}\label{eq:mu_pde}
                \sum_{k=1}^d\left(\mathcal{M}^{(k)}\mathbf{A}^{(k)} + \p{}{x_k}\mathbf{A}^{(k)}\right) + \mathcal{D} &= -\textup{diag}\left(C^{(i)}_L\right)\\
                \text{or written rowwise}\quad\mathbf{a}_i\cdot\nabla\mu_i(\mathbf{x}) + \nabla\cdot\mathbf{a}_i + \mathcal{D}_{ii} &= -C_L^{(i)},\;C^{(i)}_L \geq C_L > 0
            \end{split}
        \end{align}
        holds true for some value $C_L \in \R_{>0}$.
        The boundary condition for \eqref{eq:hyp_cons_sys} is given by
        \begin{align*}
            \mathbf{w}_{BC,i}(t,\mathbf{x}) = \begin{cases} 0 &,\,\mathbf{x}\in\mathcal{Z}_i\\ u_i(t,\mathbf{x}) &,\,\mathbf{x}\in\mathcal{C}_i\end{cases}\;t\in[0,T),\;i=1,\dots,n
        \end{align*}
        where we assume that for $i \in \{1,\dots,n\}$ the feedback controls $u_i$ satisfy
        \begin{align}
            -\sum_{i=1}^n\int_{\mathcal{C}_i}u_i(t,\mathbf{x})^2\left(\mathbf{a}_i\cdot\mathbf{n}\right)\exp(\mu_i(\mathbf{x}))\,\dd\mathbf{x}
            \leq \sum_{i=1}^n\int_{\Gamma_i^+}w_i^2\left(\mathbf{a}_i\cdot\mathbf{n}\right)\exp(\mu_i(\mathbf{x}))\,\dd\mathbf{x}.\label{ineq:boundary_control}
        \end{align}
        Then, the Lyapunov function satisfies
        \begin{align*}
            \frac{\dd}{\dd t}L(t) \leq -C_LL(t).
        \end{align*}
        Further, the solution $\mathbf{w}$ of the IBVP \eqref{eq:hyp_cons_sys} is exponentially stable in the $L^2-$sense \eqref{def:asymp_stable}. 
    \end{thm}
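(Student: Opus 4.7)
The plan is to differentiate $L(t)$ in time, use the PDE to rewrite $\partial_t \mathbf{w}$ in terms of the spatial derivatives plus the source, integrate by parts once to expose boundary terms, and then show that (i) the volume remainder collapses to $-C_L L(t)$ via the very ODE \eqref{eq:mu_pde} that defines the weights $\mu_i$, and (ii) the boundary remainder is nonpositive by the feedback condition \eqref{ineq:boundary_control}.

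Concretely, I would first compute
\begin{align*}
    \frac{\dd}{\dd t}L(t) = 2\int_\Omega \mathbf{w}^T\mathcal{E}\,\partial_t\mathbf{w}\,\dd\mathbf{x}
    = -2\sum_{k=1}^d \int_\Omega \mathbf{w}^T\mathcal{E}\mathbf{A}^{(k)}\partial_{x_k}\mathbf{w}\,\dd\mathbf{x}
    -2\int_\Omega \mathbf{w}^T\mathcal{E}\mathbf{B}\mathbf{w}\,\dd\mathbf{x},
\end{align*}
exploiting symmetry of $\mathcal{E}$. Since $\mathcal{E}$ and $\mathbf{A}^{(k)}$ are both diagonal they commute, so
$2\mathbf{w}^T\mathcal{E}\mathbf{A}^{(k)}\partial_{x_k}\mathbf{w} = \partial_{x_k}(\mathbf{w}^T\mathcal{E}\mathbf{A}^{(k)}\mathbf{w}) - \mathbf{w}^T\partial_{x_k}(\mathcal{E}\mathbf{A}^{(k)})\mathbf{w}$
with $\partial_{x_k}(\mathcal{E}\mathbf{A}^{(k)}) = \mathcal{M}^{(k)}\mathcal{E}\mathbf{A}^{(k)} + \mathcal{E}\,\partial_{x_k}\mathbf{A}^{(k)}$. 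Applying the divergence theorem, the full derivative is rearranged into a boundary integral plus an interior integral involving $\sum_k(\mathcal{M}^{(k)}\mathbf{A}^{(k)} + \partial_{x_k}\mathbf{A}^{(k)})$ minus $2\mathbf{w}^T\mathcal{E}\mathbf{B}\mathbf{w}$.

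For the interior part I combine the source contribution with the algebraic identity from integration by parts: the dissipativity assumption \eqref{ass1} replaces $-2\mathbf{w}^T\mathcal{E}\mathbf{B}\mathbf{w} = -\mathbf{w}^T(\mathbf{B}^T\mathcal{E}+\mathcal{E}\mathbf{B})\mathbf{w}$ by the upper bound $\mathbf{w}^T\mathcal{D}\mathcal{E}\mathbf{w}$. The resulting integrand is
\begin{align*}
    \mathbf{w}^T\left[\sum_{k=1}^d\left(\mathcal{M}^{(k)}\mathbf{A}^{(k)} + \partial_{x_k}\mathbf{A}^{(k)}\right) + \mathcal{D}\right]\mathcal{E}\mathbf{w}
    = -\mathbf{w}^T\operatorname{diag}(C_L^{(i)})\mathcal{E}\mathbf{w}
    \leq -C_L\,\mathbf{w}^T\mathcal{E}\mathbf{w}
\end{align*}
precisely by \eqref{eq:mu_pde} and the lower bound $C_L^{(i)}\ge C_L$. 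Integrated over $\Omega$ this yields $-C_L L(t)$.

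For the boundary part, the $\mathbf{A}^{(k)}$-sum turns, rowwise, into $-\sum_i\int_{\partial\Omega} w_i^2\exp(\mu_i)(\mathbf{a}_i\cdot\mathbf{n})\,\dd\sigma$. I split $\partial\Omega$ into $\Gamma_i^+$ and $\Gamma_i^-=\mathcal{C}_i\cup\mathcal{Z}_i$; the $\mathcal{Z}_i$ contribution vanishes because $w_i=0$ there, and on $\mathcal{C}_i$ we substitute $w_i = u_i$. The hypothesis \eqref{ineq:boundary_control} is exactly what is needed to guarantee that the remaining sum of one $\Gamma_i^+$-integral (with $\mathbf{a}_i\cdot\mathbf{n}\ge0$) and one $\mathcal{C}_i$-integral (with $\mathbf{a}_i\cdot\mathbf{n}<0$) is nonpositive. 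Combining with the interior estimate gives $\dd L/\dd t \le -C_L L(t)$; Gronwall then yields $L(t)\le L(0)\exp(-C_L t)$, and the equivalence of $L$ with $\|\mathbf{w}\|_{L^2(\Omega)}^2$ (via the bounds $0<\min_\Omega e^{\mu_i}\le\mathcal{E}\le\max_\Omega e^{\mu_i}$, guaranteed because $\mu_i\in H^s(\Omega)\hookrightarrow C^0(\bar\Omega)$ for $s\ge1+d/2$) completes the proof.

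The main obstacle I anticipate is conceptual rather than computational: it lies in checking that the weighted identity \eqref{eq:mu_pde}, which is a transport-type equation for each $\mu_i$ along the vector field $\mathbf{a}_i$, is actually solvable in $H^s(\Omega)$ under the non-characteristic condition \eqref{non_charact_cond}. The algebraic bookkeeping above only works once such $\mu_i$ exist, and also relies on the diagonal structure so that $\mathcal{E}$, $\mathbf{A}^{(k)}$, $\mathcal{M}^{(k)}$, $\mathcal{D}$ all pairwise commute; any departure from diagonality would prevent the clean contraction into a scalar factor $-C_L$.
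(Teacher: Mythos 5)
Your proposal is correct and follows essentially the same route as the paper's proof: differentiate $L$, substitute the PDE, apply the product rule and divergence theorem to split the derivative into a boundary term and a volume term, bound the source contribution via \eqref{ass1}, collapse the volume integrand to $-C_L\,\mathbf{w}^T\mathcal{E}\mathbf{w}$ via \eqref{eq:mu_pde}, control the sign of the boundary term via \eqref{ineq:boundary_control}, and conclude with Gronwall. The only cosmetic difference is that you write the time derivative as $2\int\mathbf{w}^T\mathcal{E}\partial_t\mathbf{w}$ rather than in the symmetrized form, and you make explicit the equivalence of $L$ with the $L^2$-norm, which the paper states before the theorem but does not repeat in the proof.
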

    The proof is given in the following section for better readability. We refer to Remark \ref{rem:control} for conditions under which  $u_i$ exists. In Remark \ref{rem:calc_mu} we comment on the
    solvability of the equation \eqref{eq:mu_pde} and in Remark \ref{rem:matrix_est} for conditions on the existence of $\mathcal{D}$ in assumption \eqref{ass1}. Note that the choice of $s$ in the
    space $H^s$ guarantees that $\mu$ and $\mathbf{w}$ are differentiable in $\mathbf{x}$ by Sobolev embedding.  
    \section{Proof of  Theorem \ref{thm:main}}\label{sec:prf}
    \begin{proof}
        We want to show that the Lyapunov function \eqref{eq:lyapunov_gen} decays exponentially fast under the given conditions.
        We obtain for the time derivative of the Lyapunov function
        \begin{align*}
            \frac{\dd}{\dd t}L(t) &= \frac{\dd}{\dd t}\int_\Omega \mathbf{w}^T\mathcal{E}\mathbf{w}\,\dd\mathbf{x}
            = \int_\Omega \p{}{t}\mathbf{w}^T\mathcal{E}\mathbf{w} + \mathbf{w}^T\mathcal{E}\p{}{t}\mathbf{w}\,\dd\mathbf{x}.
        \end{align*}
        Replacing the time derivative of $\mathbf{w}$ using the PDE system \eqref{eq:hyp_cons_sys} gives
        \begin{align*}
            \frac{\dd}{\dd t}L(t) &= \int_\Omega \left[-\sum_{k=1}^d\mathbf{A}^{(k)}\p{}{x_k}\mathbf{w} - \mathbf{B}\mathbf{w}\right]^T\mathcal{E}\mathbf{w}
            + \mathbf{w}^T\mathcal{E}\left[-\sum_{k=1}^d\mathbf{A}^{(k)}\p{}{x_k}\mathbf{w} - \mathbf{B}\mathbf{w}\right]\,\dd\mathbf{x}\\
            &= -\int_\Omega \sum_{k=1}^d\left[\left(\p{}{x_k}\mathbf{w}\right)^T\mathbf{A}^{(k)}\mathcal{E}\mathbf{w}
            + \mathbf{w}^T\mathcal{E}\mathbf{A}^{(k)}\left(\p{}{x_k}\mathbf{w}\right)\right]\\
            &+ \mathbf{w}^T\left(\mathbf{B}^T\mathcal{E} + \mathcal{E}\mathbf{B}\right)\mathbf{w}\,\dd\mathbf{x}
        \end{align*}
        Now with using the product rule and introducing the abbreviation $\mathcal{A}$ we obtain
        \begin{align*}
            \frac{\dd}{\dd t}L(t) &= -\int_\Omega \sum_{k=1}^d\left[\p{}{x_k}\left(\mathbf{w}^T\mathbf{A}^{(k)}\mathcal{E}\mathbf{w}\right)
            - \mathbf{w}^T\left(\p{}{x_k}\left(\mathcal{E}\mathbf{A}^{(k)}\right)\right)\mathbf{w}\right]\\
            &+ \mathbf{w}^T\left(\mathbf{B}^T\mathcal{E} + \mathcal{E}\mathbf{B}\right)\mathbf{w}\,\dd\mathbf{x}\\
            &= -\int_\Omega \nabla\cdot\mathcal{A}\,\dd\mathbf{x}
            + \int_\Omega \mathbf{w}^T\left(\sum_{k=1}^d\p{}{x_k}\left(\mathcal{E}\mathbf{A}^{(k)}\right)\right)\mathbf{w}
            - \mathbf{w}^T\left(\mathbf{B}^T\mathcal{E} + \mathcal{E}\mathbf{B}\right)\mathbf{w}\,\dd\mathbf{x}\\
            &= -\underbrace{\int_{\partial\Omega} \mathcal{A}\cdot\mathbf{n}\,\dd\mathbf{x}}_{=:\mathcal{B}(t)}\\
            &+ \underbrace{\int_\Omega \mathbf{w}^T\left[\sum_{k=1}^d\left(\mathcal{M}^{(k)}\mathbf{A}^{(k)} + \p{}{x_k}\mathbf{A}^{(k)}\right)\mathcal{E}
            - \left(\mathbf{B}^T\mathcal{E} + \mathcal{E}\mathbf{B}\right)\right]\mathbf{w}\,\dd\mathbf{x}}_{=: \mathcal{I}(t)}.
        \end{align*}
        In the following we split the proof into two parts. First we show that the boundary term $\mathcal{B}(t)$ is non-negative under the given conditions.
        In the second part we estimate $\mathcal{I}(t)$ such that we can use Gronwalls Lemma to show the decay of the Lyapunov function.
        \subsection{Estimating the Boundary Integral}\label{sec:BC}
        In the following we want to discuss the boundary term. We have
        \begin{align*}
            \mathcal{A}\cdot\mathbf{n} &= \sum_{k=1}^d\mathbf{w}^T\mathbf{A}^{(k)}\mathcal{E}\mathbf{w}n_k = \sum_{k=1}^d\sum_{i=1}^n w_ia_{ii}^{(k)}\exp(\mu_i(\mathbf{x}))w_in_k\\
            &= \sum_{i=1}^n w_i^2\exp(\mu_i(\mathbf{x}))\sum_{k=1}^d a_{ii}^{(k)}n_k = \sum_{i=1}^nw_i^2\exp(\mu_i(\mathbf{x}))\left(\mathbf{a}_i\cdot\mathbf{n}\right)
        \end{align*}
        and thus
        \begin{align*}
            \mathcal{B}(t) = \int_{\partial\Omega} \mathcal{A}\cdot\mathbf{n}\,\dd\mathbf{x}
            = \sum_{i=1}^n\int_{\partial\Omega}w_i^2\exp(\mu_i(\mathbf{x}))\left(\mathbf{a}_i\cdot\mathbf{n}\right)\,\dd\mathbf{x}.
        \end{align*}
        Using the partitioning of the boundary introduced before we rewrite the boundary term and obtain
        \begin{align*}
            \mathcal{B}(t) &= \int_{\partial\Omega} \mathcal{A}\cdot\mathbf{n}\,\dd\mathbf{x}\\
            &= \sum_{i=1}^n\left[\int_{\Gamma_i^+}w_i^2\exp(\mu_i(\mathbf{x}))\left(\mathbf{a}_i\cdot\mathbf{n}\right)\,\dd\mathbf{x}
            + \int_{\Gamma_i^-}w_i^2\exp(\mu_i(\mathbf{x}))\left(\mathbf{a}_i\cdot\mathbf{n}\right)\,\dd\mathbf{x}\right]\\
            &= \sum_{i=1}^n\left[\int_{\Gamma_i^+}w_i^2\exp(\mu_i(\mathbf{x}))\left(\mathbf{a}_i\cdot\mathbf{n}\right)\,\dd\mathbf{x}
            + \int_{\mathcal{C}_i}u_i^2\exp(\mu_i(\mathbf{x}))\left(\mathbf{a}_i\cdot\mathbf{n}\right)\,\dd\mathbf{x}\right]\\
            &\geq 0.
        \end{align*}
        \subsection{Estimating the Volume Integral}
        We now want to study the expression
        \begin{align*}
            \mathcal{I}(t) := \int_\Omega \mathbf{w}^T\left[\sum_{k=1}^d\left(\mathcal{M}^{(k)}\mathbf{A}^{(k)} + \p{}{x_k}\mathbf{A}^{(k)}\right)\mathcal{E}
            - \left(\mathbf{B}^T\mathcal{E} + \mathcal{E}\mathbf{B}\right)\right]\mathbf{w}\,\dd\mathbf{x}.
        \end{align*}
        Since $\mathcal{M}^{(k)}$ and $\mathbf{A}^{(k)}$ are diagonal matrices we need to discuss the second term, i.e.
        \begin{align*}
            \mathcal{S}(t) &:= -\int_\Omega \mathbf{w}^T\left(\mathbf{B}^T\mathcal{E} + \mathcal{E}\mathbf{B}\right)
            \mathbf{w}\,\dd\mathbf{x}
        \end{align*}
        where the integrand is a symmetric quadratic form. By assumption \eqref{ass1} there exists a diagonal matrix $\mathcal{D}$ such that
        \begin{align*}
            -\mathbf{w}^T\left(\mathbf{B}^T\mathcal{E} + \mathcal{E}\mathbf{B}\right)\mathbf{w} \leq \mathbf{w}^T\mathcal{D}\mathcal{E}\mathbf{w}
        \end{align*}
        holds and thus we have
        \begin{align*}
            \mathcal{S}(t) &= -\int_\Omega \mathbf{w}^T\left(\mathbf{B}^T\mathcal{E} + \mathcal{E}\mathbf{B}\right)\mathbf{w}\,\dd\mathbf{x}
            \leq \int_\Omega\mathbf{w}^T\mathcal{D}\mathcal{E}\mathbf{w}\,\dd\mathbf{x}.
        \end{align*}
        Altogether we hence obtain
        \begin{align*}
            \mathcal{I}(t) &= \int_\Omega \mathbf{w}^T\left[\sum_{k=1}^d\left(\mathcal{M}^{(k)}\mathbf{A}^{(k)} + \p{}{x_k}\mathbf{A}^{(k)}\right)\mathcal{E}\right]\mathbf{w} + \mathcal{S}(t)\\
            &\leq \int_\Omega \mathbf{w}^T\left[\sum_{k=1}^d\left(\mathcal{M}^{(k)}\mathbf{A}^{(k)} + \p{}{x_k}\mathbf{A}^{(k)}\right)\mathcal{E}\right]\mathbf{w}
            + \int_\Omega\mathbf{w}^T\mathcal{D}\mathcal{E}\mathbf{w}\,\dd\mathbf{x}.
        \end{align*}
        Since the $\mu_i$ satisfy \eqref{eq:mu_pde} we further yield
        \begin{align*}
            \mathcal{I}(t)
            &\leq \int_\Omega \mathbf{w}^T\left[\sum_{k=1}^d\left(\mathcal{M}^{(k)}\mathbf{A}^{(k)} + \p{}{x_k}\mathbf{A}^{(k)}\right) + \mathcal{D}\right]\mathcal{E}\mathbf{w}\,\dd\mathbf{x}\\
            &= -\int_\Omega\mathbf{w}^T\textup{diag}\left(C_L^{(i)}\right)\mathcal{E}\mathbf{w}\,\dd\mathbf{x} = -\sum_{i=1}^nC_L^{(i)}\int_\Omega w_i^2\exp(\mu_i(\mathbf{x}))\,\dd\mathbf{x}.
        \end{align*}
        Altogether we yield for the Lyapunov function
        \begin{align*}
            \frac{\dd}{\dd t}L(t) = -\mathcal{B}(t) + \mathcal{I}(t) \leq \mathcal{I}(t) \leq -\sum_{i=1}^nC_L^{(i)}\int_\Omega w_i^2\exp(\mu_i(\mathbf{x}))\,\dd\mathbf{x}
            \leq -C_LL(t).
        \end{align*}
        Applying Gronwalls Lemma gives the claimed exponential decay and thus the proof is complete.
    \end{proof}
    Concerning the extension to the $H^s$ norm we have the following observation.
    \begin{rem}
        It is a question of interest whether the obtained result can be extended to the exponential stability in the corresponding $H^s$-norm.
        The basic strategy is outlined in \cite{Bastin2016} (Chap. 4, pp. 153).
        The Lyapunov function can be defined by
        \begin{align}
            L(t) := \sum_{|\alpha|\leq s}\left\|\mathcal{E}\left(\mu^{(\alpha)}\right)^\frac{1}{2}\partial^\alpha \mathbf{u}\right\|^2_2.\label{def:Hs_Lyapunov}
        \end{align}
        Note that by introducing the $\mathcal{E}\left(\mu^{(\alpha)}\right)$ we want to highlight that in principal it is possible to choose suited weight functions for every occurring multi-index $|\alpha| \leq s$.
        Once the Lyapunov function is introduced the strategy is the same since due to the governing PDE \eqref{eq:hyp_cons_sys} the new system obtained by differentiation has the same structure.
        However, one has to make sure that the new terms fulfill the desired assumptions made for the main theorem. Once this verified the arguments are similar.
    \end{rem}
    \begin{rem}
        Theorem \ref{thm:main} is formulated for solutions being in $H^s$ and thus providing enough smoothness for the steps thorughout the  proof.
        However, it is pointed out in \cite{Bastin2016} (Chap. 2, p. 64) and \cite{Bastin2019} (Lem. 4.2) that due to the density of $C^1$ functions in $L^2$ it is possible to transfer the obtained results to more
        general weak solutions in $L^2$.
    \end{rem}
    In the following we want to discuss different special cases which lead to mathematical simplifications or situations of particular interest.  
    \begin{rem}\label{rem:control}
        The inequality for the control $u_i$ can be simplified under additional assumptions.
        \begin{enumerate}[(i)]
        \item A possible simplifying assumption on the controls is that they are uniform in space, i.e.\ $u_i = u_i(t)$.
        This leads to the following condition for the controls
        \begin{align*}
            -\sum_{i=1}^nu_i(t)^2\int_{\mathcal{C}_i}\left(\mathbf{a}_i\cdot\mathbf{n}\right)\exp(\mu_i(\mathbf{x}))\,\dd\mathbf{x}
            \leq \sum_{i=1}^n\int_{\Gamma_i^+}w_i^2\left(\mathbf{a}_i\cdot\mathbf{n}\right)\exp(\mu_i(\mathbf{x}))\,\dd\mathbf{x}.
        \end{align*}
        \item Further the same control may be applied to all components which leads to $u_i \equiv u(t,\mathbf{x}),\,i=1,\dots,n$.
        This leads to the following condition for the controls
        \begin{align*}
            -\sum_{i=1}^n\int_{\mathcal{C}_i}u(t,\mathbf{x})^2\left(\mathbf{a}_i\cdot\mathbf{n}\right)\exp(\mu_i(\mathbf{x}))\,\dd\mathbf{x}
            \leq \sum_{i=1}^n\int_{\Gamma_i^+}w_i^2\left(\mathbf{a}_i\cdot\mathbf{n}\right)\exp(\mu_i(\mathbf{x}))\,\dd\mathbf{x}.
        \end{align*}
        \item Assume that both previous assumptions hold, i.e. $u_i \equiv u(t),\,i=1,\dots,n$. Then we can explicitly give a condition for the feedback control as
        \begin{align}
            u(t)^2 \leq -\left(\sum_{i=1}^n\int_{\mathcal{C}_i}\left(\mathbf{a}_i\cdot\mathbf{n}\right)\exp(\mu_i(\mathbf{x}))\,\dd\mathbf{x}\right)^{-1}
            \sum_{i=1}^n\int_{\Gamma_i^+}w_i^2\left(\mathbf{a}_i\cdot\mathbf{n}\right)\exp(\mu_i(\mathbf{x}))\,\dd\mathbf{x}.\label{ineq:boundary_control2}
        \end{align}
        \end{enumerate}
    \end{rem}
    \begin{rem}\label{rem:matrix_est}
        We comment on the choice of the matrix $\mathcal{D}$.
        \begin{enumerate}[(i)]
            \item In the general case the following estimate always holds
            \begin{align*}
                -\mathbf{w}^T\left(\mathbf{B}^T\mathcal{E} + \mathcal{E}\mathbf{B}\right)\mathbf{w} &= -2\mathbf{w}^T\mathbf{B}^T\underbrace{\mathcal{E}}_{>0}\mathbf{w}
                =
                -2\underbrace{\mathbf{w}^T\mathcal{E}^{\frac{1}{2}}}_{:=\mathbf{y}^T}\underbrace{\mathcal{E}^{-\frac{1}{2}}\mathbf{B}^T\mathcal{E}^{\frac{1}{2}}}_{=:\mathbf{Q}}\underbrace{\mathcal{E}^{\frac{1}{2}}\mathbf{w}}_{=:\mathbf{y}}\\
                &\leq C(\mathbf{Q})\mathbf{y}^T\mathbf{y} = C(\mathbf{Q})\mathbf{w}^T\mathcal{E}\mathbf{w}.
            \end{align*}
            The constant $C(\mathbf{Q})$ depends on the minimal eigenvalue of $\mathbf{Q} + \mathbf{Q}^T$.
            \item If $\mathbf{B}$ is symmetric we can improve the estimate using the minimal eigenvalue $\lambda_{min} \in Spec(\mathbf{B}) \subset \R$
            \begin{align*}
                -\mathbf{w}^T\left(\mathbf{B}^T\mathcal{E} + \mathcal{E}\mathbf{B}\right)\mathbf{w} = -2\mathbf{w}^T\mathbf{B}^T\mathcal{E}\mathbf{w}
                \leq -2\lambda_{min}\mathbf{w}^T\mathcal{E}\mathbf{w}.
            \end{align*}
            This corresponds to the matrix $\mathcal{D} = -2\lambda_{min}\textup{\textbf{Id}}$.
            \item In the case that $\mathbf{B}$ is a diagonal matrix itself we simply have $\mathcal{D} = -2\mathbf{B}$ without  any assumption.
            \item If the quadratic form is positive definite,  the source term is estimated  by $\mathcal{S}(t) \leq 0$ and thus equation \eqref{eq:mu_pde} simplifies accordingly.
        \end{enumerate}
    \end{rem}
    Case (iii) of Remark \ref{rem:matrix_est} is a special case of interest since it leads to the following result.
    \begin{cor}\label{cor:sharp_case}
        Let the assumption of Theorem \eqref{thm:main} hold true. Additionally assume that  $\mathbf{B}$ is a diagonal matrix. The Lyapunov function is given as
        \begin{align*}
            L(t) = \int_\Omega \mathbf{w}(t,\mathbf{x})^T\mathcal{E}(\mu(\mathbf{x}))\mathbf{w}(t,\mathbf{x})\,\dd\mathbf{x}
        \end{align*}
        where we assume that there exists $\mu_i(\mathbf{x}) \in H^s(\Omega)$ satisfy
        \begin{align}
            \begin{split}\label{eq:mu_pde2}
                \sum_{k=1}^d\left(\mathcal{M}^{(k)}\mathbf{A}^{(k)} + \p{}{x_k}\mathbf{A}^{(k)}\right) - 2\mathbf{B} &= -C_L\mathbf{Id}\\
                \text{or written rowwise}\quad\mathbf{a}_i\cdot\nabla\mu_i(\mathbf{x}) + \nabla\cdot\mathbf{a}_i - 2\mathbf{B}_{ii} &= -C_L,\; C_L > 0
            \end{split}
        \end{align}
        for some value $C_L \in \R_{>0}$.
        The boundary condition for \eqref{eq:hyp_cons_sys} is given by
        \begin{align*}
            \mathbf{w}_{BC,i}(t,\mathbf{x}) = \begin{cases} 0 &,\,\mathbf{x}\in\mathcal{Z}_i\\ u_i(t,\mathbf{x}) &,\,\mathbf{x}\in\mathcal{C}_i\end{cases}\;t\in[0,T),\;i=1,\dots,n
        \end{align*}
        where the $u_i$ satisfies
        \begin{align}
            -\sum_{i=1}^n\int_{\mathcal{C}_i}u_i(t,\mathbf{x})^2\left(\mathbf{a}_i\cdot\mathbf{n}\right)\exp(\mu_i(\mathbf{x}))\,\dd\mathbf{x}
            = \sum_{i=1}^n\int_{\Gamma_i^+}w_i^2\left(\mathbf{a}_i\cdot\mathbf{n}\right)\exp(\mu_i(\mathbf{x}))\,\dd\mathbf{x}.\label{eq:boundary_control}
        \end{align}
        Then the Lyapunov function satisfies
        \begin{align*}
            L(t) = L(0)\exp(-C_Lt).
        \end{align*}
        and thus the solution $\mathbf{w}$ of the IBVP \eqref{eq:hyp_cons_sys} is asymptotically stable in the $L^2-$sense.
    \end{cor}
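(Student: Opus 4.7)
The plan is to re-run the proof of Theorem \ref{thm:main} but upgrade every inequality to an equality, exploiting the two new hypotheses: (a) $\mathbf{B}$ is diagonal, which makes the estimate in case (iii) of Remark \ref{rem:matrix_est} sharp with $\mathcal{D} = -2\mathbf{B}$; and (b) the boundary feedback \eqref{eq:boundary_control} is prescribed as an equality, which is exactly what is needed to annihilate the boundary flux rather than merely bound it.

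First, I would differentiate $L(t)$ in time, substitute $\partial_t\mathbf{w}$ from the PDE \eqref{eq:hyp_cons_sys}, apply the product rule and the divergence theorem exactly as in the proof of Theorem \ref{thm:main}, to arrive at the decomposition
\begin{align*}
\frac{\dd}{\dd t}L(t) = -\mathcal{B}(t) + \mathcal{I}(t),
\end{align*}
with $\mathcal{B}(t)$ and $\mathcal{I}(t)$ defined as there. This step is algebraic and does not use the new hypotheses.

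Second, for the boundary contribution, the diagonal structure of $\mathbf{A}^{(k)}$ gives the same pointwise identity $\mathcal{A}\cdot\mathbf{n} = \sum_i w_i^2 \exp(\mu_i)(\mathbf{a}_i\cdot\mathbf{n})$ as in Section \ref{sec:BC}. Splitting $\partial\Omega = \Gamma_i^+ \cup \mathcal{Z}_i \cup \mathcal{C}_i$, the contribution over $\mathcal{Z}_i$ vanishes because $w_i=0$ there, so that
\begin{align*}
\mathcal{B}(t) = \sum_{i=1}^n \int_{\Gamma_i^+} w_i^2 \exp(\mu_i)(\mathbf{a}_i\cdot\mathbf{n})\,\dd\mathbf{x} + \sum_{i=1}^n \int_{\mathcal{C}_i} u_i^2 \exp(\mu_i)(\mathbf{a}_i\cdot\mathbf{n})\,\dd\mathbf{x}.
\end{align*}
The equality condition \eqref{eq:boundary_control} is then precisely the statement that these two sums cancel, so $\mathcal{B}(t) = 0$ identically in $t$.

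Third, for the volume term I would use the diagonality of $\mathbf{B}$ to write $-\mathbf{w}^T(\mathbf{B}^T\mathcal{E} + \mathcal{E}\mathbf{B})\mathbf{w} = \mathbf{w}^T(-2\mathbf{B})\mathcal{E}\mathbf{w}$ as a genuine equality, which corresponds to choosing $\mathcal{D} = -2\mathbf{B}$ with equality in \eqref{ass1}. Substituting this together with the row-wise identity \eqref{eq:mu_pde2} yields
\begin{align*}
\mathcal{I}(t) = -C_L \int_\Omega \mathbf{w}^T\mathcal{E}\mathbf{w}\,\dd\mathbf{x} = -C_L L(t).
\end{align*}
Combining both parts gives the exact ODE $\frac{\dd}{\dd t}L(t) = -C_L L(t)$, whose solution is $L(t) = L(0)\exp(-C_L t)$, and the asymptotic $L^2$-stability \eqref{def:asymp_stable} follows from the equivalence of $L$ with the $L^2$-norm.

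There is no real obstacle here; the work is in carefully tracking which steps in the proof of Theorem \ref{thm:main} were genuine inequalities and verifying that each one collapses to an equality under the two additional assumptions. The only point that deserves an explicit line in the write-up is the cancellation on the boundary, since it is what replaces the sign-based estimate $\mathcal{B}(t) \geq 0$ used in the general theorem.
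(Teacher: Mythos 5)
Your proposal is correct and follows essentially the same route as the paper's own proof: the time derivative of $L$ is decomposed into the boundary term $\mathcal{B}(t)$, which vanishes by the equality condition \eqref{eq:boundary_control}, and the volume term $\mathcal{I}(t)$, which equals $-C_L L(t)$ exactly because the diagonality of $\mathbf{B}$ turns the estimate of Theorem \ref{thm:main} into an identity with $\mathcal{D}=-2\mathbf{B}$ together with \eqref{eq:mu_pde2}. Your explicit remark that every inequality in the original proof collapses to an equality is precisely the content of the paper's argument.
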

    \begin{proof}
        The proof basically follows the same steps as the previous one. With the proposed choice of the control we obtain $\mathcal{B}(t) = 0$.
        For the volume integral $\mathcal{I}(t)$ we have
        \begin{align*}
            \mathcal{I}(t) &= \int_\Omega \mathbf{w}^T\left[\sum_{k=1}^d\left(\mathcal{M}^{(k)}\mathbf{A}^{(k)} + \p{}{x_k}\mathbf{A}^{(k)}\right)\mathcal{E}
            - \left(\mathbf{B}^T\mathcal{E} + \mathcal{E}\mathbf{B}\right)\right]\mathbf{w}\,\dd\mathbf{x}\\
            &= \int_\Omega \mathbf{w}^T\left[\sum_{k=1}^d\left(\mathcal{M}^{(k)}\mathbf{A}^{(k)} + \p{}{x_k}\mathbf{A}^{(k)}\right)\mathcal{E}
            - 2\mathbf{B}\mathcal{E}\right]\mathbf{w}\,\dd\mathbf{x}\\
            &= -\int_\Omega\mathbf{w}^TC_L\mathcal{E}\mathbf{w}\,\dd\mathbf{x} = -C_L\sum_{i=1}^n\int_\Omega w_i^2\exp(\mu_i(\mathbf{x}))\,\dd\mathbf{x}.
        \end{align*}
        Together this gives
        \begin{align*}
            \frac{\dd}{\dd t}L(t) = \mathcal{B}(t) + \mathcal{I}(t) = \mathcal{I}(t) = -C_L\sum_{i=1}^n\int_\Omega w_i^2\exp(\mu_i(\mathbf{x}))\,\dd\mathbf{x}
            = -C_LL(t).
        \end{align*}
    \end{proof}
    \begin{rem}\label{rem:calc_mu}
        An important task that remains is the calculation of the weights $\mu_i$ which have to satisfy \eqref{eq:mu_pde}
        \[
        \mathbf{a}_i\cdot\nabla\mu_i(\mathbf{x}) + \nabla\cdot\mathbf{a}_i + \mathcal{D}_{ii} = -C_L^{(i)}.
        \]
        Note that this PDE is an inhomogeneous linear first order transport PDE in $d$ dimensions with variable coefficients.
        A classical approach to obtain a solution is the method of characteristics as presented e.g. \cite{Evans1998}.
        However, we want to emphasize that although we are considering a control problem on a bounded domain, the weights $\mu_i$ can be determined from \eqref{eq:mu_pde} without any additional restrictions.
        Thus it is worth noting that only the PDE has to be satisfied and there is freedom in the choice of further conditions which may be due to constraints from the application.
        Moreover further simplifications are possible and will be presented using  particular examples in Section \ref{sec:app_hamjac}.
    \end{rem}
    \section{Application to Stabilization of Hamilton--Jacobi Equations}\label{sec:app_hamjac}
    In this section we discuss the application of the stabilization result.
    In particular, we show conditions for the solvability of equation \eqref{eq:mu_pde} as well as on the particular structure of the matrices $\mathbf{A}^{(k)}.$
    Theorem \eqref{thm:main} has been motivated by a stabilization problem arising in a forming process leading to the question of  stabilization of Hamilton--Jacobi equations.
    We start by exploring formally the  relation to this class of PDEs.
    Hamilton-Jacobi equations for describing the level--set of a deforming process have been used e.g. in \cite{CDC2022}.
    Here, a Titanium deformation process needed to be stabilized at a desired level--set by applying an external force at the boundary. The quantity of interest
    \[
    \phi = \phi(t,\mathbf{x}),\;\phi : \Omega \to \R
    \]
    for $\Omega \in \mathbb{R}^n$ has been used to describe the boundary between formed and unformed material. The evolution of this set  $\{ (t,\mathbf{x}):  \phi(t,\mathbf{x}) = 0\}$ is mathematically described by a Hamilton-Jacobi equation
    \begin{align}
        \begin{dcases}
            \p{\phi}{t} + H(\mathbf{x}, \nabla \phi) &= 0,\;(t,\mathbf{x})\in(0,T)\times\Omega,\\
            \phi(0,\mathbf{x}) &= \phi_0(\mathbf{x}),\;\mathbf{x}\in\Omega,\\
            \nabla\phi(t,\mathbf{x})\cdot\mathbf{n} &= 0,\;(t,\mathbf{x})\in[0,T)\times\partial\Omega
        \end{dcases}\label{eq:hamilton_pde}
    \end{align}
    for an Hamiltonian $H$ that describes the flow field induced by the process. If we introduce the new variable $\mathbf{\Psi}(t,\mathbf{x}) = \nabla\phi(t,\mathbf{x}),$ i.e.,
    \[
    \psi_i := \partial_{x_i}\phi.
    \]
    we formally transform the initial equation into a system of (hyperbolic) PDEs. The system \eqref{eq:hamilton_pde} reads then  
    \begin{align}
        \begin{dcases}
            \p{}{t}\mathbf{\Psi} + \nabla (H(\mathbf{x}, \mathbf{\Psi})) &= 0,\;(t,\mathbf{x})\in(0,T)\times\Omega\\
            \mathbf{\Psi}(0,\mathbf{x}) &= \nabla\phi_0(\mathbf{x}),\;\mathbf{x}\in\Omega,\\
            \mathbf{\Psi}(t,\mathbf{x}) &= \mathbf{\Psi}_{BC}(t,\mathbf{x}),\;(t,\mathbf{x})\in[0,T)\times\partial\Omega
        \end{dcases}\label{eq:hyp_cons_sys_v2}
    \end{align}
    Note that von-Neumann boundary conditions in \eqref{eq:hamilton_pde} are now  Dirichlet  boundary condition in \eqref{eq:hyp_cons_sys_v2} understood e.g. in the sense of \cite{Bardos}.
    In view of the stabilization of the process we then  linearize $H$ at a desired reference state $\overline{\mathbf{\Psi}}$ and obtain
    \begin{align}
        H(\mathbf{x},\mathbf{\Psi}) \approx H(\mathbf{x},\overline{\mathbf{\Psi}}) + \nabla_{\mathbf{\Psi}}H(\mathbf{x},\overline{\mathbf{\Psi}})\cdot\left(\mathbf{\Psi} - \overline{\mathbf{\Psi}}\right).
        \label{approx:H}
    \end{align}
    Denote by  $\mathbf{\overline{\Psi}} = \nabla\overline{\phi}$ and assume $\overline{\phi}$ fulfills  \eqref{eq:hyp_cons_sys_v2}.
    Deviations of this desired state are considered and hence  $\mathbf{\Psi}$ is given by 
    \[
    \mathbf{\Psi} = \mathbf{\overline{\Psi}} + \mathbf{w},
    \]
    for some initial disturbance we assume $w_i(0,\mathbf{x}) \neq 0$. By the following computation we obtain  a linear system using \eqref{approx:H} starting from \eqref{eq:hyp_cons_sys_v2}
    \begin{align*}
        0 &= \p{}{t}\mathbf{\Psi} + \nabla (H(\mathbf{x}, \mathbf{\Psi}))\\
        &\approx \p{}{t}\mathbf{\Psi} + \nabla (H(\mathbf{x},\overline{\mathbf{\Psi}})
        + \nabla_{\mathbf{\Psi}}H(\mathbf{x},\overline{\mathbf{\Psi}})\cdot\left(\mathbf{\Psi} - \overline{\mathbf{\Psi}}\right))\\
        &= \p{}{t}(\mathbf{\overline{\Psi}} + \mathbf{w}) + \nabla (H(\mathbf{x},\overline{\mathbf{\Psi}}) + \nabla_{\mathbf{\Psi}}H(\mathbf{x},\overline{\mathbf{\Psi}})\cdot\mathbf{w})\\
        &= \p{}{t}\mathbf{w} + \nabla\left(\nabla_{\mathbf{\Psi}}H(\mathbf{x},\overline{\mathbf{\Psi}})\cdot\mathbf{w}\right).
    \end{align*}
    By  definition of $\mathbf{\Psi}$ and $\mathbf{\overline{\Psi}}$ we have
    \begin{align*}
        &\p{}{x_i}\Psi_j = \p{^2}{x_i\partial x_j}\phi = \p{}{x_j}\Psi_i,\quad\text{and}\quad
        \p{}{x_i}\overline{\Psi}_j = \p{^2}{x_i\partial x_j}\overline{\phi} = \p{}{x_j}\overline{\Psi}_i
    \end{align*}
    which implies
    \begin{align}
        \p{}{x_i}w_j = \p{}{x_j}w_i.\label{commute_deriv_w}
    \end{align}
    Furthermore, we denote
    \[
    H_k(\mathbf{x}) := \left.\p{}{\psi_k}H(\mathbf{x},\mathbf{\Psi})\right|_{\mathbf{\Psi} = \overline{\mathbf{\Psi}}}
    \]
    to obtain the following set of equations for $i=1,\dots,n$ 
    \begin{align*}
        0 = \p{}{t}w_i + \p{}{x_i}\left(\sum_{k=1}^nH_k\cdot w_k\right)
        = \p{}{t}w_i + \sum_{k=1}^n\left[\p{H_k}{x_i}w_k + H_k\p{w_k}{x_i}\right]
    \end{align*}
    Thus by introducing the source term
    \[
    S_i = -\sum_{k=1}^n\p{H_k}{x_i}w_k
    \]
    and using \eqref{commute_deriv_w} to write
    \[
      \p{w_k}{x_i} = \p{w_i}{x_k}
    \]
    we obtain the following system of $n$ (transport) PDEs studied in the Theorem \eqref{thm:main}
    \begin{align}
        \begin{split}\label{eq:w_PDE}
            \p{}{t}w_i(t,\mathbf{x}) + \sum_{k=1}^nH_k(\mathbf{x})\p{w_i}{x_k}(t,\mathbf{x}) &= S_i\\
            \text{or}\quad\p{}{t}w_i(t,\mathbf{x}) + \nabla_{\mathbf{\Psi}}H(\mathbf{x},\overline{\mathbf{\Psi}})\cdot\nabla w_i(t,\mathbf{x}) &= S_i.
        \end{split}
    \end{align}
    Note that the obtained equations \eqref{eq:w_PDE} are linear in $w_i$ since $H$ and the reference state $\overline{\mathbf{\Psi}}$ are assumed to be known.
    Moreover the sources are algebraic for the same reason and are responsible for the coupling of the equations.
    Note that the source term vanishes identically for $\nabla_{\mathbf{\Psi}}H(.,\overline{\mathbf{\Psi}}) = \nabla_{\mathbf{\Psi}}H(\overline{\mathbf{\Psi}}) \in \R^n$, i.e. the gradient is constant.
    This is in fact the case for the actual forming process, see \cite{CDC2022}. However, the presented result extends this towards more general $S_i.$
    \par
    For the proof of the result the compact vector form is suitable. We show here the relation towards $H.$ Let the matrices $$\mathbf{A}^{(k)}(\mathbf{x}) := H_k(\mathbf{x})\mathbf{Id},\; k=1,\dots,n$$ be given and
    \[
    \mathbf{B}(\mathbf{x}) := \left(\p{H_k}{x_i}\right)_{i,k=1,\dots,n}, \; \mathbf{S} = -\mathbf{B}\mathbf{w}. \; 
    \]
    This leads to the system 
    \begin{align*}
        \p{}{t}\mathbf{w} + \sum_{k=1}^n\mathbf{A}^{(k)}(\mathbf{x})\p{}{x_k}\mathbf{w} + \mathbf{B}(\mathbf{x})\mathbf{w} = 0.
    \end{align*}
    The matrices $\mathbf{A}^{(k)}$ are diagonal matrices with the multiple eigenvalue $H_k(\mathbf{x})$ and hence have a full set of eigenvectors.
    By equation \eqref{eq:w_PDE} we  conclude
    \[
    \mathbf{a}_i(\mathbf{x}) = \nabla_{\mathbf{\Psi}}H(\mathbf{x},\overline{\mathbf{\Psi}}).
    \]
    In the following sections we discuss particular cases that may serve as  examples and simplifications. 
    \subsection{Potential Gradient Flow}
    A particular case of interest is a Hamiltonian of the form
    \begin{align*}
        H(\mathbf{x},\mathbf{\Psi}) = \tilde{\mathbf{H}}(\mathbf{x})\cdot\mathbf{\Psi}.
    \end{align*}
    If additionally $\tilde{\mathbf{H}}(\mathbf{x})$ is the gradient of a potential $\varphi(\mathbf{x})$ we have
    \begin{align*}
        \tilde{\mathbf{H}}(\mathbf{x}) &= \nabla_{\mathbf{\Psi}}H(\mathbf{x},\mathbf{\Psi}) = \nabla\varphi(\mathbf{x}),\\
        \mathbf{B}(\mathbf{x}) &= \left(\p{H_j}{x_i}\right)_{i,j=1,\dots,n} = \left(\p{}{x_i}\p{\varphi}{x_j}\right)_{i,j=1,\dots,n} = \mathbf{D}^2\varphi(\mathbf{x}).
    \end{align*}
    Thus $\mathbf{B}$ is the Hessian of a potential $\varphi$ and hence we have the situation of case (ii), Remark \ref{rem:matrix_est}.
    \subsection{Vanishing Mixed Source Terms}\label{subsec:vanish_mix_source}
    Consider the situation of $\mathbf{B}$ being a diagonal matrix and thus we are in the case (iii) of Remark \ref{rem:matrix_est}.
    In this case  the source term is as follows
    \begin{align*}
        \mathcal{S}(t) &= \sum_{i=1}^n\int_\Omega 2w_iS_i\exp(\mu_i(\mathbf{x}))\,\dd\mathbf{x}
        = -2\sum_{i=1}^n\int_\Omega \p{H_i}{x_i}w_i^2\exp(\mu_i(\mathbf{x}))\,\dd\mathbf{x}
    \end{align*}
    Moreover, Corollary \ref{cor:sharp_case} can be applied.
    Thus by choosing each $\mu_i$ according to
    \[
    \nabla_{\mathbf{\Psi}}H\cdot\nabla \mu_i + \nabla\cdot\nabla_{\mathbf{\Psi}}H - 2\p{H_i}{x_i} = -C_L
    \]
    we obtain
    \[
    \mathcal{I}(t) = \sum_{i=1}^n\int_\Omega w_i^2\left(\nabla_{\mathbf{\Psi}}H\cdot\nabla \mu_i + \nabla\cdot\nabla_{\mathbf{\Psi}}H - 2\p{H_i}{x_i}\right) \exp(\mu_i(\mathbf{x}))\,\dd\mathbf{x}
    = -\sum_{i=1}^nC_LL_i(t).
    \]
    If we now also choose the control $u_i$ such that 
    \begin{align*}
    	u(t)^2 = \left(\sum_{i=1}^n\int_{\mathcal{C}_i}\left(\nabla_{\mathbf{\Psi}}H\cdot\mathbf{n}\right)\exp(\mu_i(\mathbf{x}))\,\dd\mathbf{x}\right)^{-1}
    	\sum_{i=1}^n\int_{\Gamma_i^+}w_i^2\left(\nabla_{\mathbf{\Psi}}H\cdot\mathbf{n}\right)\exp(\mu_i(\mathbf{x}))\,\dd\mathbf{x}.
    \end{align*}
    we obtain $\mathcal{B}(t) = 0$ leading altogether to (see Cor. \ref{cor:sharp_case})
    \[
    \frac{\dd}{\dd t}L(t) = -C_LL(t)\quad\Leftrightarrow\quad L(t) = L(0)\exp(-C_Lt).
    \]
    In this particular case we computed exactly the decay of the Lyapunov function which may be beneficial for testing numerical methods.
    An example of a Hamiltonian $H$ leading to this situation is e.g.
    \[
    H(\mathbf{x},\mathbf{\Psi}) = \sum_{i=1}^nH_i(x_i)\psi_i\quad\Rightarrow\quad \p{H_i}{x_j} = 0,\,i\neq j.
    \]
    In this case, the PDE for $\mu_i$ simplifies and we can find a solution as
    \begin{align}
        \mu_i(\mathbf{x}) = \sum_{k=1}^nF^{(i)}_k(x_k)\label{eq:mu_ansatz_sep}
    \end{align}
    for some functions $F^{(i)}_k$ as follows. Note that due to the particular structure of $\mu_i$  we have
    \[
    \p{^2}{x_k\partial x_l}\mu_i \equiv 0,\;k\neq l.
    \]
    By  differentiating the PDE for $\mu_i$ w.r.t.\ $x_k$ we obtain
    \begin{align*}
        0 &= \p{}{x_k}\left[\nabla\cdot\nabla_{\mathbf{\Psi}}H + \nabla_{\mathbf{\Psi}}H\cdot\nabla \mu_i - 2\p{H_i}{x_i}\right]\\
        &= \p{^2}{x_k^2}H_k + \p{}{x_k}H_k\p{}{x_k}\mu_i + H_k\p{^2}{x_k^2}\mu_i - 2\p{^2}{x_k\partial x_i}H_i\delta_{ki}\\
        &= \p{^2}{x_k^2}H_k + \p{}{x_k}H_kF_k'^{(i)}(x_k) + H_kF_k''^{(i)}(x_k) - 2\p{^2}{x_k\partial x_i}H_i\delta_{ki}\\
        \Leftrightarrow\quad &\frac{\dd}{\dd x_k}\left(H_k(x_k)F_k'^{(i)}(x_k)\right) = 2\p{^2}{x_k\partial x_i}H_i\delta_{ki} - \p{^2}{x_k^2}H_k\\
        \Leftrightarrow\quad &H_k(x_k)F_k'^{(i)}(x_k) = \int^{x_k}2\p{^2}{x_k\partial x_i}H_i(s)\delta_{ki} - \p{^2}{x_k^2}H_k(s)\,\dd s.
    \end{align*}
    Hence, the $F_k^{(i)}$are  determined by integration which in turn define $\mu_i.$ 
    \subsection{Constant Hamiltonian Gradient}\label{subsec:const_hamilton}
    Under the assumption that $\nabla_{\mathbf{\Psi}}H = \mathbf{C} \in\R^n\setminus\{\mathbf{0}\}$ is constant in each component the source terms $S_i$ vanish identically.
    In this case the PDE for $\mu_i$ reads
    \begin{align}
        -C_L^{(i)} = \nabla_{\mathbf{\Psi}}H\cdot\nabla\mu_i,\;C^{(i)}_L\in\R_{>0}.\label{eq:pde_mu_v2}
    \end{align}
    This PDE is an inhomogeneous linear advection equation. We choose an index $j$ with $H_j \neq 0$ and consider the following problem
    \begin{align*}
        \begin{dcases}
            \p{}{\tau}\mu_i(\tau,\mathbf{y}) + \mathbf{a}\cdot\nabla_y\mu_i(\tau,\mathbf{y}) = -\frac{C_L^{(i)}}{H_j},\\
            \mu_i(0,\mathbf{y}) = g_i(\mathbf{y})
        \end{dcases}\\
        \text{with}\;\tau := x_j, \; 
        \mathbf{y} := (x_1,\dots,x_{j-1},x_{j+1},\dots,x_n),\\
        \mathbf{a} := \frac{1}{H_j}\overline{\mathbf{H}},\quad 
        \overline{\mathbf{H}} := (H_1,\dots,H_{j-1},H_{j+1},\dots,H_n).
    \end{align*}
    Its general solution is given by
    \[
    \mu_i(\tau,\mathbf{y}) = g_i(\mathbf{y} - \mathbf{a}\tau) + \int_0^\tau f(\mathbf{y} + (s-\tau)\mathbf{a},s)\,\dd s
    \]
    which leads to
    \begin{align*}
        \mu_i(\mathbf{x}) = g_i(\mathbf{y} - \mathbf{a}x_j) - \frac{C_L^{(i)}}{H_j}x_j.
    \end{align*}
    The function $g_i$ can be chosen arbitrarily. Hence, a full solution to equation \eqref{eq:mu_pde} exists. Note that the case $g_i\equiv 0$ corresponds to the 1D case of stabilizing controls as
    introduced e.g. in \cite{Bastin2016,MR2302744}.\\
    Note that the choice for $\mu(x_1,x_2)$ made in \cite{CDC2022} is a combination of the present case with $\nabla_{\mathbf{\Psi}}H = \mathbf{C} \in\R^n\setminus\{\mathbf{0}\}$
    and the solution ansatz $\eqref{eq:mu_ansatz_sep}$ for $\mu$ used in the previous part.
    The excluded case $\nabla_{\mathbf{\Psi}}H = \mathbf{C} = \mathbf{0}$ is of no interest since it implies $H = H(\mathbf{x})$ and thus the Hamilton-Jacobi equation becomes the trivial ODE
    \[
      \p{}{t}\Phi(t,\mathbf{x}) + H(\mathbf{x}) = 0.
    \]
    \section{Summary}
    A novel Lyapunov function for $L^2-$control of a class of multi--dimensional systems of hyperbolic equations has been presented.
    The analysis relies on the fact that this system can be derived from Hamilton--Jacobi equations.
    A stabilizing feedback control has been derived and  exponential decay of a weighted $L^2-$norm has been established.
    Several examples have been presented where some are inspired by a model for forming processes. Future extensions could be given towards numerical discretizations as well as stabilization in $H^s-$norm.
    %
    \section*{Acknowledgments}
    \small{This research is part of the DFG SPP 2183 \emph{Eigenschaftsgeregelte Umformprozesse}, project 424334423 and the authors thank the Deutsche Forschungsgemeinschaft (DFG, German Research Foundation)
    for the financial support through 320021702/GRK2326, 333849990/IRTG-2379, CRC1481, 423615040/SPP1962, 462234017, 461365406, ERS SFDdM035 and under Germany’s Excellence Strategy EXC-2023 Internet of Production 390621612
    and under the Excellence Strategy of the Federal Government and the Länder.}
    \phantomsection
    \bibliographystyle{abbrv}
    \bibliography{stb_hypsys_literature}
\end{document}